\newcommand\eps{\varepsilon}
\newcommand\ds{\displaystyle}
\newtheorem{theorem}{Theorem}
\newtheorem{lemma}[theorem]{Lemma}
\theoremstyle{remark}
\newcommand{\x}{{\boldsymbol{x}}}
\newcommand{\y}{{\boldsymbol{y}}}
\newcommand{\zero}{{\boldsymbol{0}}}
\newcommand{\bmu}{{\boldsymbol{\mu}}}
\newcommand{\R}{{\mathbb{R}}}
\newcommand{\ddelta}{{\delta'}}
\title{Convex polynomial approximation in $\R^d$ with Freud
weights}
\author{O.\ Maizlish}
\address{Department of Mathematics, University of Manitoba, Winnipeg, MB, R3T2N2, Canada}
\email{alexmaizlish@gmail.com}
\author{A.\ Prymak}
\address{Department of Mathematics, University of Manitoba, Winnipeg, MB, R3T2N2, Canada}
\email{prymak@gmail.com}
\thanks{Both authors were supported in part by NSERC of Canada. The postdoctoral fellowship of
the first author was also partially funded by the Department of
Mathematics of the University of Manitoba.}
\begin{document}

\begin{abstract}
We show that for multivariate Freud-type weights
$W_\alpha(\x)=\exp(-|\x|^\alpha)$, $\alpha>1$, any convex function
$f$ on $\R^d$ satisfying $fW_\alpha\in L_p(\R^d)$ if $1\le
p<\infty$, or $\lim_{|\x|\to\infty}f(\x)W_\alpha(\x)=0$ if
$p=\infty$, can be approximated in the weighted norm by a sequence
$P_n$ of algebraic polynomials convex on $\R^d$ such that
$\|(f-P_n)W_\alpha\|_{L_p(\R^d)}\to0$ as $n\to\infty$. This extends
the previously known result for $d=1$ and $p=\infty$ obtained by the
first author to higher dimensions and integral norms using a
completely different approach.

\noindent {\it Keywords:} Multivariate weighted approximation; Freud
weights; convex approximation
\end{abstract}

\maketitle

\section{Introduction}

In this paper, we study multivariate polynomial approximation with
exponential weights. Given a continuous weight function $W:\R^d\to
(0,1]$, $d\geq 1$, consider the class of real-valued functions
$C_W(\R^d):=\{f\in C(\R^d): \lim_{|\x|\to\infty}f(\x)W(\x)=0\}$,
where
$|\x|=|(x_1,\dots,x_d)|:=\left(x_1^2+x_2^2+\ldots+x_d^2\right)^{1/2}$.
The density of algebraic polynomials in the space $C_W(\R^d)$ has
been established by Kroo in~\cite{Kro} for various exponential
weights $W(\x)=e^{-Q(\x)}$. It was shown that under some technical
assumptions on the function $Q(\x)$ derived from the univariate
case, the set of algebraic polynomials in $d$ variables is dense in
$C_W(\R^d)$ equipped with the weighted norm
$\|f\|_{W}:=\sup_{\x\in\R^d} |f(\x)W(\x)|$ if and only if for every
$i$, $1\leq i\leq d$,
\[
\ds\int_{-\infty}^\infty
\dfrac{Q(0,\ldots,0,x_i,0,\ldots,0)}{1+x_i^2}\,dx_i=\infty.
\]
In particular, the so-called (multivariate) Freud weights
$W_\alpha(\x):=e^{-|\x|^\alpha}$ satisfy the above condition for
$\alpha\geq 1$. Unlike in the multivariate case, the univariate
theory of approximation with exponential weights is well studied and
fairly complete, see, for instance, survey~\cite{Lub} by Lubinsky.

A typical problem of shape preserving approximation asks whether one
can approximate a function possessing a certain geometric property
(e.g., monotonicity or convexity) by a polynomial having the same
property. A survey on shape preserving approximation of functions of
one variable is given in~\cite{KLPS}.

Two recent articles studied possibility of shape preserving
approximation with Freud weights on the real line:
Maizlish~\cite{Mai} showed density for the so-called $k$-monotone
polynomial approximation in the space $C_{W_\alpha}(\R)$, then
Leviatan and Lubinsky~\cite{LevLub} established Jackson-type
estimates on this kind of approximation. (On the real line,
$1$-monotone and $2$-monotone functions are the usual monotone and
convex functions, respectively.) In the present paper, we obtain a
multivariate analogue of the result from~\cite{Mai} for convex
approximation and extend the treatment to the integral norms.

Let $S\subset\R^d$ be a measurable set. As usual, denote by
$L_p(S),\,1\le p\leq\infty,$ the space of all measurable functions
$f:S\rightarrow \R$ such that $\|f\|_{L_p(S)}<\infty$, where
$\|f\|_{L_p(S)}:=\left(\int_S |f(\x)|^p\,d\x\right)^{1/p}$ if $1\le
p<\infty$, and $\|f\|_{L_\infty(S)}:=\mathrm{ess\,sup}_{\x\in
S}|f(\x)|$ if $p=\infty$. If $S\subset\R^d$ is convex (i.e., for any
$\x,\y\in S$ the straight line segment joining $\x$ and $\y$ is also
in $S$), we recall that a function $f:S\to\R$ is called convex on
$S$ if $f(\theta\x+(1-\theta)\y)\leq\theta f(\x)+(1-\theta)f(\y)$
for any $\x,\y\in S$, $\theta\in[0,1]$. Alternative criteria for
convexity of twice continuously differentiable functions on an open
convex domain are: positive semi-definiteness of the Hessian or
non-negativity of any second directional derivative. If $f:S\to\R$ is convex
on $S$, then $f$ is continuous on the interior of $S$. Denote by
$\Pi_{n,d}$ the space of algebraic polynomials of total degree $\le
n$ in $d$ variables. Let $\Pi_{n,d}^{(2)}$ be the set of all
polynomials from $\Pi_{n,d}$ that are convex on $\R^d$.

Now we are ready to state the main result.

\begin{theorem}\label{thm1}
Let $1\le p\le \infty$ and $\alpha>1$. Suppose that $f:\R^d\to\R$ is
convex on $\R^d$, and, in addition, $fW_\alpha\in L_p(\R^d)$ if
$1\le p<\infty$, or $f\in C_{W_\alpha}(\R^d)$ if $p=\infty$. Then
\begin{equation}\label{eq21}
\inf_{P\in\Pi_{n,d}^{(2)}}\|(f-P)W_\alpha\|_{L_p(\R^d)}\to 0, \quad
n\to\infty.
\end{equation}
\end{theorem}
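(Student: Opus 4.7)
My strategy is to reduce to approximating a smooth, uniformly strictly convex surrogate of $f$, produce a polynomial convex on a large ball $B_R$ by an accurate $C^2$ approximation, and finally add a small multiple of the high-degree even-power $|\x|^m$ to globalize convexity. For the surrogate, let $\phi\in C_c^\infty(\R^d)$ be a nonnegative radial mollifier with $\int\phi=1$, set $\phi_\epsilon(\x):=\epsilon^{-d}\phi(\x/\epsilon)$, and define $\tilde f_{\epsilon,\eta}:=f*\phi_\epsilon+\eta|\x|^2$. A Jensen-type argument under the integral shows $f*\phi_\epsilon$ is convex, and it is clearly $C^\infty$; the summand $\eta|\x|^2$ enforces $\nabla^2\tilde f_{\epsilon,\eta}\succeq 2\eta I$ pointwise. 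The local comparability $W_\alpha(\x)\le C_\epsilon W_\alpha(\x-\y)$ for $|\y|\le\epsilon$, combined with Young's inequality and dominated convergence, yields $\|(f-\tilde f_{\epsilon,\eta})W_\alpha\|_{L_p(\R^d)}\to 0$ as $\epsilon,\eta\to 0$ (for $p=\infty$ the $C_{W_\alpha}$ tail condition replaces $L_p$ integrability).

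\textbf{Local approximation and global correction.} For a radius $R$ to be chosen later, a multivariate Jackson-type theorem applied to the $C^\infty$ function $\tilde f_{\epsilon,\eta}$ on $B_R$ yields $Q_n\in\Pi_{n,d}$ with $\|Q_n-\tilde f_{\epsilon,\eta}\|_{C^2(B_R)}<\eta$; then $\nabla^2 Q_n\succeq\eta I$ on $B_R$, so $Q_n$ is convex on $B_R$. For an even integer $m>n$ and $\lambda>0$, set $P_n:=Q_n+\lambda|\x|^m$. The Hessian of $|\x|^m$ has smallest eigenvalue $m|\x|^{m-2}$, while a Bernstein--Walsh-type estimate gives $\|\nabla^2 Q_n(\x)\|\le M'(2|\x|/R)^{n-2}$ for $|\x|\ge R$, with $M'$ depending only on $\|\tilde f_{\epsilon,\eta}\|_{C^2(B_R)}+\eta$. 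Choosing $\lambda$ just large enough that $\lambda m|\x|^{m-2}$ dominates this bound on $|\x|\ge R$ makes $P_n$ globally convex on $\R^d$.

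\textbf{Assembly and main obstacle.} The triangle inequality splits $\|(P_n-f)W_\alpha\|_{L_p(\R^d)}$ into three pieces: the mollification error (small by the first step), the local approximation error $\|(Q_n-\tilde f_{\epsilon,\eta})W_\alpha\|_{L_p}$ (small on $B_R$ by the $C^2$-control, and small off $B_R$ since $\tilde f_{\epsilon,\eta}W_\alpha\in L_p$ has vanishing tails and $Q_n$ has at most polynomial growth against the super-exponentially decaying $W_\alpha$), and the correction term $\lambda\||\x|^m W_\alpha\|_{L_p}$. The delicate point is to arrange simultaneously that (i) $\lambda m|\x|^{m-2}$ overwhelms $\|\nabla^2 Q_n(\x)\|$ outside $B_R$ and (ii) $\lambda\||\x|^m W_\alpha\|_{L_p}$ stays below any prescribed tolerance. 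A Stirling computation gives $\||\x|^m W_\alpha\|_{L_p}\asymp m^{m/\alpha}$, which must be offset by the factor $R^m$ coming from (i); for $\alpha>1$ one can take $R$ large enough that $m^{1/\alpha}\ll R$, leaving the required slack to satisfy both constraints and push the total error below any prescribed $\delta$. This balancing, which exploits the super-exponential decay of $W_\alpha$ for $\alpha>1$, is precisely where the hypothesis on $\alpha$ is used.
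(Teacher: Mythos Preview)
Your overall architecture matches the paper's: build a convex surrogate, approximate it by a polynomial convex on a large ball $B_R$, then add $\lambda|\x|^m$ to enforce global convexity, balancing $\lambda$ against the weighted norm of $|\x|^m$. The difference is in the surrogate, and this is where your argument has a genuine gap. Your mollified function $\tilde f_{\epsilon,\eta}=f*\phi_\epsilon+\eta|\x|^2$ inherits the growth of $f$, and a convex $f$ with $fW_\alpha\in L_p$ (or $f\in C_{W_\alpha}$) can grow essentially like $e^{|\x|^\beta}$ for any $\beta<\alpha$; take, e.g., $f(\x)=e^{|\x|^\beta}$ in one variable. Consequently $\|\tilde f_{\epsilon,\eta}\|_{C^k(B_R)}\asymp e^{R^\beta}$ for every $k$, so your $C^2$-Jackson step on $B_R$ forces the degree $n$ to grow super-polynomially in $R$, and the constant $M'$ in your Bernstein--Walsh bound on $\nabla^2 Q_n$ is of order $e^{R^\beta}$ as well. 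Your final balance needs $R\gg m^{1/\alpha}\sim n^{1/\alpha}$ to kill the Stirling factor $m^{m/\alpha}$, but the Jackson and $M'$ constraints force $R$ to be much \emph{smaller} than any power of $n$; these requirements are incompatible, and the ``leaving the required slack'' sentence does not go through.

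The paper resolves exactly this issue by choosing a different surrogate: it first approximates $f$ in the weighted norm by a \emph{piecewise linear} convex function $h$ (Lemma~\ref{lem1}), which is globally Lipschitz, so $|h(\x)|\le L|\x|+|h(\zero)|$. Shvedov's Jackson theorem on $B(r_n)$ then gives $\|h-P_n\|_{L_\infty(B(r_n))}\le c\,r_n/n$ and, crucially, $\|P_n\|_{L_\infty(B(r_n))}=O(r_n)$. This linear growth is what makes the balance work with $r_n=n^{1/\beta}$ for any $1<\beta<\alpha$: one simultaneously has $r_n/n\to 0$ (Jackson), $r_n\gg n^{1/\alpha}$ (restricted-range tails and the Stirling offset), and $M'=O(r_n)$. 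If you want to keep the mollification route, you must first truncate the growth of $f$ in a convexity-preserving way; taking $\max\{f*\phi_\epsilon,\ell\}$ against a supporting affine $\ell$ does not help, since the maximum of two convex functions still carries the larger one's growth. The paper's piecewise-linear step is precisely the missing ingredient.
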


We remark that for any polynomial $P\in\Pi_{n,d}$ and $\alpha\ge1$,
we have $PW_\alpha\in L_p(\R^d)$ and $P\in C_{W_\alpha}(\R^d)$,
therefore the restrictions on $f$ in the above theorem are a natural
assumption to consider approximation in the weighted norm.

Note that in \cite{Kro} a similar result to~\eqref{eq21} for {\it
unconstrained} approximation was obtained for $p=\infty$ only (but
for a wider class of weights). For $1\leq p<\infty$,
density of algebraic polynomials in the weighted $L_p$-norm also
follows from \cite{Kro}. For instance, if $fW_\alpha\in L_p(\R^d)$, $\alpha>1$,
we can approximate $fW_\alpha$ in $L_p(\R^d)$ by a function $g\in C(\R^d)$ with
bounded support, and then apply the result of~\cite{Kro} for $gW_\alpha^{-1}$ to
find an approximating polynomial $P$ in $C_{W_\beta(\R^d)}$, where $1\le\beta<\alpha$.
Then density follows from the estimate
\[
\|(f-P)W_\alpha\|_{L_p(\R^d)}\le
\|fW_\alpha-g\|_{L_p(\R^d)}+
\|(gW_\alpha^{-1}-P)W_\beta\|_{L_\infty(\R^d)}
\|W_\alpha W_\beta^{-1}\|_{L_p(\R^d)},
\]
where $\|W_\alpha W_\beta^{-1}\|_{L_p(\R^d)}<\infty$. Such reduction to the case
$p=\infty$ will not transfer to convexity preserving approximation since
continuous functions of bounded support are not convex in general.

A very interesting open question is whether Theorem~\ref{thm1}
remains valid for $\alpha=1$, i.e., for the weight
$W_1(\x)=e^{-|\x|}$? The answer is not known even for $d=1$ and
$p=\infty$. It is not clear either if similar results
to~\cite{LevLub} and~\cite{Mai} can be established for the weight
$W_1$. It is worth noting that $W_1$ is in a certain sense
``boundary'' case of the Freud weights, and many phenomena behave
differently when $\alpha=1$, see~\cite{Lub}*{Section~5}.
Nevertheless, algebraic polynomials are dense in $C_{W_1}(\R^d)$.

A Jackson-type estimate for non-weighted multivariate convex
polynomial approximation was established by Shvedov~\cite{Shv} in
1981, and no significant improvement of this estimate has been
obtained to date. The main result of~\cite{Shv} as well as some
ideas of the proofs were extremely important for this work.

Our proof of Theorem~\ref{thm1} is constructive, i.e., it contains a
specific procedure for construction of the approximating convex
polynomial. While it may be possible to derive an estimate on the
error of approximation  using our construction, such an estimate
would be impractical, therefore we restricted ourselves only to
establishing density in this paper. It will likely require a
significantly different approach to obtain a reasonable (e.g.
Jackson-type) quantitative estimate.

In Section~2, we state some known results and prove auxiliary lemmas
concerning piecewise linear convex weighted approximation and
multivariate versions of restricted range inequalities. In
Section~3, we first outline the proof of Theorem~\ref{thm1}
highlighting the main ideas without technicalities, then give all
the details of the proof.

\section{Auxiliary lemmas}

We start with some notations and definitions. By $B(r):=\{\x\in\R^d:
|\x|\leq r\}$ we denote the closed ball of radius $r>0$ in $\R^d$
centered at the origin. The modulus of continuity of $f:M\to\R$ is
defined as
\[
\omega\left(f,t,M\right):=\sup\limits_{\x',\x''\in M:|\x'-\x''|<t}
|f(\x')-f(\x'')|.
\]
A function $g:\R^d\to\R$ is called a {\it piecewise linear convex}
function, if $g$ is the pointwise maximum of {\it finitely} many
linear functions (elements of $\Pi_{1,d}$) on $\R^d$.

The following two lemmas are particular cases of the results
from~\cite{Shv} applied on $B(r)$ and stated in our notations.
\begin{lemma}[\cite{Shv}*{Lemma~3}, piecewise linear convex approximation]\label{lem2}
Let $f:B(r)\to\R$ be convex and $r>0$ be fixed. Then for any
$\delta\in(0,1]$, there exists a piecewise linear convex function
$g_\delta$ such that
\begin{equation}\label{g-below}
g_\delta(\x)\le f(\x), \quad \x\in\R^d,
\end{equation}
and
\[
\|f-g_\delta\|_{L_\infty(B(r))}\leq
c_1\omega\left(f,r\delta,B(r)\right),
\]
where $c_1>0$ is a constant depending only on $d$.
\end{lemma}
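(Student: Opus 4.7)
The plan is to realize $g_\delta$ as the pointwise maximum of finitely many supporting affine functions of $f$ taken at a carefully chosen finite net of interior points of $B(r)$. Each supporting affine function automatically lies below $f$ on $B(r)$ by convexity, so the maximum is piecewise linear convex on $\R^d$ and satisfies~\eqref{g-below}. The quantitative bound then reduces to controlling, at every $\x\in B(r)$, the single quantity $f(\x)-L_i(\x)$, where $L_i$ is the supporting affine function taken at a suitable net point $\x_i$ close to $\x$.

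Concretely, I would fix a finite $(r\delta)$-net $\{\x_i\}_{i=1}^N$ of the concentric ball $B((1-\delta)r)$ (for $\delta=1$ this reduces to $\{\zero\}$, which still suffices). At each $\x_i$ pick any subgradient $\xi_i\in\partial f(\x_i)$; such a $\xi_i$ exists because $\x_i$ lies in the interior of $B(r)$. Set $L_i(\x):=f(\x_i)+\xi_i\cdot(\x-\x_i)$, so that $L_i\le f$ on $B(r)$ by convexity, and define $g_\delta:=\max_{1\le i\le N}L_i$, which, as the maximum of finitely many elements of $\Pi_{1,d}$, is piecewise linear convex on $\R^d$ and lies below $f$ on $B(r)$ by construction. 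For the error estimate at a given $\x\in B(r)$, pass to its radial projection $\x^{*}\in B((1-\delta)r)$ (defined by $\x^{*}:=\x$ if $|\x|\le(1-\delta)r$ and $\x^{*}:=(1-\delta)r\,\x/|\x|$ otherwise), noting $|\x-\x^{*}|\le r\delta$, and pick a net point $\x_i$ with $|\x_i-\x^{*}|\le r\delta$, so that $|\x-\x_i|\le 2r\delta$. Since $B(\x_i,r\delta)\subset B(r)$, applying the subgradient inequality along the unit vector $\xi_i/|\xi_i|$ yields $|\xi_i|\le\omega(f,r\delta,B(r))/(r\delta)$. Combining,
\[
0\le f(\x)-g_\delta(\x)\le f(\x)-L_i(\x)\le|f(\x)-f(\x_i)|+|\xi_i|\,|\x-\x_i|\le\omega(f,2r\delta,B(r))+2\omega(f,r\delta,B(r)),
\]
and the standard dilation estimate $\omega(f,2t,M)\le 2\omega(f,t,M)$ finishes the proof with a constant $c_1$ depending only on $d$.

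The main obstacle I anticipate is that subgradients of a convex function can blow up near the boundary $\partial B(r)$, which would rule out a naive $(r\delta)$-net spread over all of $B(r)$. The remedy is precisely to restrict the net to the inner ball $B((1-\delta)r)$ and to route each $\x\in B(r)$ through its radial projection $\x^{*}$; the elementary inequality $|\x-\x^{*}|\le r\delta$ absorbs the annulus $B(r)\setminus B((1-\delta)r)$ into only a constant factor in the final bound, keeping the dependence on $f$ confined to $\omega(f,r\delta,B(r))$.
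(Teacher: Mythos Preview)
Your argument is correct. The paper does not give its own proof of this lemma; it is quoted from Shvedov~\cite{Shv}*{Lemma~3}, with the added remark that the piecewise-linear-convex structure of $g_\delta$ and inequality~\eqref{g-below} ``readily follow from the proof'' there. Your construction---$g_\delta$ as the pointwise maximum of supporting affine functions of $f$ taken at a finite $(r\delta)$-net of the inner ball $B((1-\delta)r)$---is exactly the type of argument this remark is pointing to, and is in the same spirit as Shvedov's original construction. You correctly identified the only genuine obstacle (subgradients blowing up near $\partial B(r)$) and handled it by confining the net to $B((1-\delta)r)$ and routing each $\x\in B(r)$ through its radial projection $\x^{*}$; the resulting constant is in fact absolute (your computation gives $c_1=4$), which is stronger than the stated $d$-dependence.
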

\begin{lemma}[\cite{Shv}*{Theorem~1}, Jackson-type estimate on convex multivariate approximation]\label{shv-jack}
Let $f:B(r)\to\R$ be convex and $r>0$ be fixed. Then for any
positive integer $n$, there exists a polynomial $P_n\in\Pi_{n,d}$
convex on $B(r)$ such that
\[
\|f-P_n\|_{L_\infty(B(r))}\leq
c_2\omega\left(f,\frac{r}{n+1},B(r)\right),
\]
where $c_2>0$ is a constant depending only on $d$.
\end{lemma}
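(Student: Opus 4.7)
The plan is to (i) regularize $f$ by convex mollification, (ii) approximate the smoothed function by a polynomial that simultaneously approximates its Hessian, and (iii) add a small positive-definite quadratic correction to restore exact convexity on $B(r)$.

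For step~(i), I would first extend $f$ to a convex function $F$ on all of $\R^d$ (for instance, as the pointwise supremum of all affine minorants of $f$ on $B(r)$), and set $F_\eta:=F*\varphi_\eta$ where $\varphi_\eta$ is a smooth, radial, nonnegative mollifier of width $\eta\in(0,r)$. Convolution with a nonnegative kernel preserves convexity, so $F_\eta$ is smooth and convex on $\R^d$, and standard mollifier estimates yield
\[
\|f-F_\eta\|_{L_\infty(B(r))}\le c\,\omega(f,\eta,B(r+\eta)),\qquad \|\nabla^k F_\eta\|_{L_\infty(B(r))}\le C_k L\eta^{1-k}\quad(k\ge 1),
\]
where $L$ is a Lipschitz constant of $F$ near $B(r)$ (finite because $f$ is convex on $B(r)$). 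For steps~(ii)--(iii), I would invoke a concrete multivariate Jackson-type construction on $B(r)$ (say, a partial sum of a Chebyshev-type expansion, or a Dzyadyk-style polynomial) producing $Q_n\in\Pi_{n,d}$ with both
\[
\|F_\eta-Q_n\|_{L_\infty(B(r))}\le C_k(r/n)^{k}\|\nabla^{k}F_\eta\|_{L_\infty(B(r))}
\]
and the simultaneous Hessian estimate
\[
\lambda:=\|\nabla^2(F_\eta-Q_n)\|_{L_\infty(B(r))}\le C_k(r/n)^{k-2}\|\nabla^{k}F_\eta\|_{L_\infty(B(r))}.
\]
Since $\nabla^2 F_\eta\succeq 0$, we have $\nabla^2 Q_n\succeq -\lambda I$ on $B(r)$, so the polynomial
\[
P_n(\x):=Q_n(\x)+\tfrac{\lambda}{2}|\x|^2
\]
lies in $\Pi_{n,d}$, is convex on $B(r)$ (because $\nabla^2 P_n=\nabla^2 Q_n+\lambda I\succeq 0$ there), and differs from $Q_n$ in $L_\infty(B(r))$ by at most $\tfrac{\lambda}{2}r^2$.

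The main obstacle is balancing the three error sources -- smoothing of order $L\eta$, polynomial approximation of order $C_k(r/n)^k L\eta^{1-k}$, and convexity correction of order $C_k(r/n)^{k-2} L\eta^{1-k} r^2$ -- so that their sum is of order $\omega(f,r/(n+1),B(r))\asymp Lr/n$. Choosing $\eta$ slightly larger than $r/n$ together with a slowly growing approximation order $k=k(n)$ formally achieves this rate, but only if one has quantitative control on the constants $C_k$ as $k$ increases with $n$; this is the technical heart of the argument, and is why a concrete polynomial construction (rather than an abstract existence theorem) is required. A secondary technical issue is to make the mollified function $F_\eta$ available on a neighbourhood of $B(r)$, which is precisely what the initial convex extension of $f$ to $\R^d$ accomplishes.
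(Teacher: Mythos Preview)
The paper does not actually prove this lemma; it is quoted as a particular case of \cite{Shv}*{Theorem~1}, with only a remark that Shvedov's definition of the modulus of continuity (via the Minkowski functional) has been translated to the Euclidean one. So there is no ``paper's proof'' to compare against beyond the citation. That said, your proposal has two genuine gaps that would prevent it from recovering the stated estimate.

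First, the convex extension step is not under control. Taking $F$ to be the supremum of all affine minorants of $f$ on $B(r)$ can give $F\equiv+\infty$ outside $B(r)$: for instance $f(\x)=-\sqrt{r^2-|\x|^2}$ is convex on $B(r)$ with tangent planes of arbitrarily large slope, so every point outside the ball lies above some affine minorant by an arbitrarily large amount. Even when $F$ is finite, your argument uses a Lipschitz constant $L$ of $F$ near $B(r)$ and then aims for an error of order $Lr/n$, asserting $\omega(f,r/(n+1),B(r))\asymp Lr/n$. This asymptotic is false in general: convex functions continuous on the closed ball need not be Lipschitz, and when they are, $L$ is not comparable to $(n/r)\,\omega(f,r/(n+1),B(r))$. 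The lemma asks for a bound by the modulus of $f$ on $B(r)$, not by a Lipschitz constant of an extension.

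Second, the balance you describe does not close even formally. With $\eta$ proportional to $r/n$ the convexity--correction term $\tfrac{\lambda}{2}r^2\approx C_k(r/n)^{k-2}L\eta^{1-k}r^2$ is of order $C_kLnr$, not $Lr/n$; forcing it down requires $\eta\gg r/n$, which inflates the smoothing error. You correctly flag that letting $k=k(n)$ grow might help, but this needs $C_k$ to grow at most subexponentially in $k$, which standard Jackson constants typically do not. In Shvedov's route (as the paper hints when invoking \cite{Shv}*{Lemma~3} and the idea from \cite{Shv}*{Theorem~2}), one first passes to a piecewise linear convex approximant, which gives a structured object whose polynomial approximation and convexity correction can be analysed directly, bypassing both the extension problem and the $C_k$ blow-up.
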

It is important to explain certain points regarding
Lemmas~\ref{g-below} and~\ref{shv-jack}. First, in~\cite{Shv}
Shvedov defines the modulus of continuity in a slightly different
way (using the Minkowski functional of the domain) which was
accounted for in the statements of the above two lemmas. Secondly,
the fact that $g_\delta$ is a piecewise linear convex function and
the property~\eqref{g-below} are not explicitly stated
in~\cite{Shv}*{Lemma~3}, but readily follow from the proof
of~\cite{Shv}*{Lemma~3}.

Now we prove an analogue of Lemma~\ref{lem2} for weighted
approximation on $\R^d$.
\begin{lemma}[Piecewise linear convex weighted approximation]\label{lem1}
Let $f:\R^d\to\R$ be convex, $1\le p\le \infty$ and $\alpha\ge 1$.
Assume that $fW_\alpha\in L_p(\R^d)$ if $1\le p<\infty$, or $f\in
C_{W_\alpha}(\R^d)$ if $p=\infty$. Then, for any $\eps>0$, there
exist a piecewise linear convex function $h$ such that
\begin{equation}\label{eq5}
\|(f-h)W_\alpha\|_{L_p(\R^d)}<\eps
\end{equation}
and
\begin{equation}\label{eq23}
\omega(h,t,\R^d)\le Lt \quad\text{for any } t>0,
\end{equation}
where $L>0$ does not depend on $t$.
\end{lemma}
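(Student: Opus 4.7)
The plan is to exhaust $f$ from below by finite maxima of its affine minorants. Convexity of $f$ on $\R^d$ provides, for each $\y\in\R^d$, a subgradient $p_\y\in\R^d$ and the corresponding supporting affine function $\ell_\y(\x):=f(\y)+p_\y\cdot(\x-\y)$, which satisfies $\ell_\y(\x)\le f(\x)$ for all $\x\in\R^d$ with equality at $\x=\y$. Fix a countable dense subset $\{\y_j\}_{j\ge1}$ of $\R^d$ (for instance $\mathbb{Q}^d$), choose any subgradient $p_{\y_j}$ at each $\y_j$, and set $h_N(\x):=\max_{1\le j\le N}\ell_{\y_j}(\x)$. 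By construction each $h_N$ is piecewise linear convex, $h_N\le f$ on $\R^d$, and the sequence $(h_N)_{N\ge1}$ is non-decreasing in $N$.

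The first step is to verify that $h_N\to f$ pointwise on $\R^d$. Since a finite convex function on $\R^d$ is continuous and locally Lipschitz, the subgradient norms $|p_{\y_j}|$ stay bounded whenever $\y_j$ ranges over any fixed bounded set. For a fixed $\x\in\R^d$ and $\y_j\to\x$ along the dense subset, both $f(\y_j)\to f(\x)$ and $p_{\y_j}\cdot(\x-\y_j)\to 0$, so $\ell_{\y_j}(\x)\to f(\x)$. Combined with $h_N\le f$, this gives $h_N(\x)\uparrow f(\x)$ for every $\x$.

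The next step is to promote pointwise convergence to convergence in the weighted norm. Because $\alpha\ge 1$, any affine function $\ell$ on $\R^d$ satisfies $\ell W_\alpha\in L_p(\R^d)$ and $\ell W_\alpha\in C_0(\R^d)$, so the non-negative dominating function $(f-h_1)W_\alpha$ lies in $L_p(\R^d)$ in both regimes. For $1\le p<\infty$, the pointwise inequality $0\le f-h_N\le f-h_1$ together with dominated convergence yields $\|(f-h_N)W_\alpha\|_{L_p(\R^d)}\to 0$. For $p=\infty$, the non-negative continuous functions $g_N:=(f-h_N)W_\alpha$ belong to $C_{W_\alpha}(\R^d)$, decrease monotonically in $N$, and tend pointwise to $0$. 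Extending each $g_N$ to the one-point compactification of $\R^d$ by $g_N(\infty):=0$ places us in the setting of Dini's theorem on a compact space, which upgrades the convergence to the uniform statement $\|g_N\|_{L_\infty(\R^d)}\to 0$.

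Choosing $N$ so large that $\|(f-h_N)W_\alpha\|_{L_p(\R^d)}<\eps$ and setting $h:=h_N$ establishes \eqref{eq5}. The Lipschitz bound \eqref{eq23} is automatic with $L=\max_{1\le j\le N}|p_{\y_j}|$ since $h$ is the pointwise maximum of finitely many affine functions. The main technical point is the $p=\infty$ case: pointwise monotone convergence on the non-compact space $\R^d$ does not by itself imply uniform convergence, and the crucial ingredient is the $C_{W_\alpha}$-decay of the $g_N$ at infinity, which is what allows the Dini/compactification argument to go through.
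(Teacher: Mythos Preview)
Your argument is correct and is genuinely different from the paper's. The paper proceeds in two stages: it first fixes a large ball $B(r)$ on which both $fW_\alpha$ and a single supporting affine function $lW_\alpha$ have small tails, and then invokes Shvedov's Lemma~\ref{lem2} to get a piecewise linear convex $g_\delta$ close to $f$ on $B(r)$; the final approximant is $h=\max\{g_\delta,l\}$. Your approach bypasses Lemma~\ref{lem2} entirely: you build $h_N$ directly as the maximum of supporting affine functions at a dense set of points, obtain monotone pointwise convergence $h_N\uparrow f$, and then pass to the weighted norm by dominated convergence (for $p<\infty$) or by Dini's theorem on the one-point compactification (for $p=\infty$). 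Your route is more self-contained, avoids the dependence on an external approximation lemma, and makes the Lipschitz constant explicit as $\max_{j\le N}|p_{\y_j}|$; the paper's route, on the other hand, reuses machinery already in place for the main theorem and keeps the tail control very concrete by isolating a single affine minorant outside $B(r)$.

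One small notational slip: you write that the functions $g_N=(f-h_N)W_\alpha$ ``belong to $C_{W_\alpha}(\R^d)$'', but by the paper's definition $g\in C_{W_\alpha}$ means $gW_\alpha\to 0$ at infinity. What you need (and correctly use in the next line) is simply that $g_N\in C_0(\R^d)$, i.e.\ $g_N(\x)\to 0$ as $|\x|\to\infty$; this follows from $f\in C_{W_\alpha}(\R^d)$ and $h_N\in C_{W_\alpha}(\R^d)$, the latter because each affine $\ell_{\y_j}$ satisfies $\ell_{\y_j}W_\alpha\to 0$ for $\alpha\ge 1$. With that adjustment the Dini argument goes through exactly as you describe.
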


\begin{proof}
Denote by $l$ a linear function with the graph that is a supporting
hyperplane for the graph of $y=f(\x)$ at the origin (we choose one
of possibly many such linear functions). Then $l(\zero)=f(\zero)$
and $l(\x)\le f(\x)$, $\x\in\R^d$.

Both functions $f$ and $l$ are clearly in $C_{W_\alpha}(\R^d)$ if
$p=\infty$ or have finite weighted $L_p$-norms if $1\le p<\infty$.
Thus, we can choose a sufficiently large $r>0$ such that
\[
\max\{\|fW_\alpha\|_{L_p(\R^d\setminus
B(r))},\|lW_\alpha\|_{L_p(\R^d\setminus B(r))}\}<\eps/4.
\]
By Lemma~\ref{lem2}, there exists a piecewise linear convex function
$g_\delta$ ($\delta$ to be specified in a moment) such that
\begin{equation*}
\|f-g_\delta\|_{L_\infty(B(r))}\leq c_1\omega(f,r\delta,B(r)).
\end{equation*}
As $f$ is convex on $\R^d$, it is continuous everywhere. Therefore,
since $r$ is fixed and $B(r)$ is a compact set, we have
$\omega(f,r\delta,B(r))\to0$ as $\delta\to0+$. Hence, we can choose
a sufficiently small $\delta>0$ such that
\[
\|f-g_\delta\|_{L_\infty(B(r))}<\frac{\eps}{2\|W_\alpha\|_{L_p(B(r))}}.
\]

We now define $h(\x):=\max\{g_\delta(\x),l(\x)\}$, $\x\in\R^d$,
which is clearly a piecewise linear convex function. This definition
and~\eqref{g-below} imply
\[
g_\delta(\x)\leq h(\x)\leq f(\x)\quad\text{and}\quad l(\x)\leq
h(\x)\leq f(\x), \quad \x\in\R^d.
\]
Hence, we can conclude that
\[
\|(f-h)W_\alpha\|_{L_p(B(r))}\leq \|f-h\|_{L_\infty(B(r))}
\|W_\alpha\|_{L_p(B(r))}\leq
\|f-g_\delta\|_{L_\infty(B(r))}\|W_\alpha\|_{L_p(B(r))}<\eps/2,
\]
and
\[
\|(f-h)W_\alpha\|_{L_p(\R^d\setminus B(r))}\leq
\|(f-l)W_\alpha\|_{L_p(\R^d\setminus B(r))}\leq
\|fW_\alpha\|_{L_p(\R^d\setminus
B(r))}+\|lW_\alpha\|_{L_p(\R^d\setminus B(r))}<\eps/2,
\]
implying~\eqref{eq5}.

As $h$ is a piecewise linear convex function, it is the maximum of a
{\it finite} number of linear functions, therefore~\eqref{eq23}
follows immediately.
\end{proof}

The bound~\eqref{eq23} means that $h$ satisfies a Lipschitz
condition of order one on the whole $\R^d$. By choosing the ratio
$\frac{r}{n+1}$ to be ``small'', \eqref{eq23} ensures that the error
of polynomial approximation of $h$ on $B(r)$ provided by
Lemma~\ref{shv-jack} can be also ``small'' even if the radius $r$ is
``large''.

Another important ingredient for the proof of the main result is
restricted-range inequalities. On the real line, these inequalities
establish the estimates on the weighted norm of any polynomial
outside a fixed finite interval in terms of the weighted norm of
this polynomial inside the interval. An overview on this subject in
the univariate case can be found in~\cite{Lub}*{Section~6}. We now
state the multivariate generalizations of these inequalities which,
to the knowledge of the authors, were first studied by Ganzburg
in~\cite{Gan}.

Let us recall the notions of the Freud and Mhaskar-Rakhmanov-Saff
numbers. These numbers are defined for some classes of exponential
weights $W(\x)=e^{-Q(\x)}$, we however focus only on the case of
Freud weights $W_\alpha(\x)$, $\alpha>1$. For $n\geq 1$,
$q_n:=(n/\alpha)^{1/\alpha}$ denotes the $n$th Freud number, and
$a_n:=\left(2^{\alpha-2}\frac{\Gamma(\alpha/2)^2}{\Gamma(\alpha)}\right)^{1/\alpha}n^{1/\alpha}$
denotes the $n$th Mhaskar-Rakhmanov-Saff number. Ignoring the
constant factors, both values have the asymptotic behavior of
$n^{1/\alpha}$ as $n\to\infty$.

\begin{lemma}[Multivariate restricted-range inequalities]\label{lem3}
Let $\alpha>1$ be fixed. Then for any $P_n\in\Pi_{n,d}$,
\begin{equation}\label{eq6}
\|P_nW_\alpha\|_{L_\infty(\R^d\setminus B({4q_{2n}}))}\leq
2^{-n}\|P_nW_\alpha\|_{L_\infty(B({q_{2n}}))}
\end{equation}
and, for any $p$, $1\le p<\infty$, there exist positive constants
$c_3$, $c_4$ and $\delta$ (depending only on $\alpha$, $p$, and
$d$) such that
\begin{equation}\label{eq22}
\|P_nW_\alpha\|_{L_p\left(\R^d\setminus
B\left(2a_n\right)\right)}\leq
c_3e^{-c_4n^{\delta}}\|P_nW_\alpha\|_{L_p(B(a_n))}.
\end{equation}
\end{lemma}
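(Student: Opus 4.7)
Both inequalities are reduced to one-dimensional restricted-range inequalities for the Freud weight $e^{-|t|^\alpha}$ on $\R$ (surveyed in \cite{Lub}*{Section~6}) by restricting $P_n$ to rays through the origin: for each $\theta\in S^{d-1}$, the function $p_\theta(t):=P_n(t\theta)$ is a univariate polynomial of degree $\le n$, and $W_\alpha(t\theta)=e^{-|t|^\alpha}$ coincides with the univariate Freud weight.

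For \eqref{eq6}: Fix $\x\in\R^d$ with $|\x|>4q_{2n}$ and set $\theta=\x/|\x|$. Applying the one-dimensional $L_\infty$ restricted-range inequality to $p_\theta$ at $t=|\x|$ yields
\[
|P_n(\x)|W_\alpha(\x)=|p_\theta(|\x|)|e^{-|\x|^\alpha}\le 2^{-n}\sup_{|t|\le q_{2n}}|p_\theta(t)|e^{-|t|^\alpha}\le 2^{-n}\|P_nW_\alpha\|_{L_\infty(B(q_{2n}))},
\]
since the segment $\{t\theta:|t|\le q_{2n}\}$ lies in $B(q_{2n})$. Taking the supremum over $\x$ gives \eqref{eq6}.

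For \eqref{eq22}: In polar coordinates $\x=r\theta$ with $d\x=r^{d-1}\,dr\,d\sigma(\theta)$,
\[
\|P_nW_\alpha\|_{L_p(\R^d\setminus B(2a_n))}^p=\int_{S^{d-1}}\int_{2a_n}^\infty|p_\theta(r)|^p e^{-pr^\alpha}r^{d-1}\,dr\,d\sigma(\theta).
\]
To neutralize the volume factor, fix a small $\eta>0$ (to be chosen at the end) and use $r^{d-1}\le C_\eta e^{\eta r^\alpha}$, valid for all $r\ge 0$, which bounds the inner integral by $C_\eta\int_{2a_n}^\infty|p_\theta(r)|^p e^{-(p-\eta)r^\alpha}\,dr$. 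The perturbed exponent defines a Freud weight with Mhaskar-Rakhmanov-Saff number $\tilde a_n=(p/(p-\eta))^{1/\alpha}a_n$; since $\alpha>1$, for $\eta$ small enough $\tilde a_n<2a_n$, so the univariate $L_p$ restricted-range inequality applied to $p_\theta$ against this perturbed weight furnishes an upper bound of the form $C_1 e^{-c_1 n}\int_{-\tilde a_n}^{\tilde a_n}|p_\theta(r)|^p e^{-(p-\eta)|r|^\alpha}\,dr$. Restoring the original weight via $e^{\eta|r|^\alpha}\le e^{\eta\tilde a_n^\alpha}=e^{O(\eta n)}$ and shrinking $\eta$ further preserves exponential decay in $n$.

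The main obstacle is the final conversion: one must still pass from the univariate $L_p$ integral of $|p_\theta|^p e^{-p|r|^\alpha}$ over $|r|\le\tilde a_n$ (no volume element) to the multivariate $L_p$ norm of $P_nW_\alpha$ over $B(a_n)$ (which carries the factor $r^{d-1}$). After splitting at $r=0$, integrating over $\theta$ and exploiting the symmetry $\theta\mapsto-\theta$, this reduces to comparing $\int_0^{\tilde a_n}F(r)e^{-pr^\alpha}\,dr$ with $\int_0^{a_n}F(r)e^{-pr^\alpha}r^{d-1}\,dr$, where $F(r):=\int_{S^{d-1}}|P_n(r\theta)|^p\,d\sigma(\theta)$. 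The two are not pointwise comparable because $r^{d-1}$ vanishes at the origin, but the gap is bridged by a weighted Nikolskii-type inequality for multivariate Freud weights, bounding $\|P_nW_\alpha\|_{L_\infty(B(a_n))}$ by a polynomial multiple of $\|P_nW_\alpha\|_{L_p(B(a_n))}$; this polynomial-in-$n$ factor is absorbed by $e^{-c_1 n^\delta}$ after a final adjustment of constants, yielding \eqref{eq22}.
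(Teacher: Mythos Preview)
Your treatment of \eqref{eq6} is exactly the paper's: restrict to rays through the origin and apply the univariate $L_\infty$ restricted-range inequality from \cite{Lub}*{Theorem~6.1}.

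For \eqref{eq22} the paper takes a much shorter path: it simply quotes Ganzburg's multivariate restricted-range inequality \cite{Gan}*{Theorem~2}, which already gives
\[
\|P_nW_\alpha\|_{L_p\left(\R^d\setminus B\left((1+n^{\delta_2-2/3})a_n\right)\right)}\le c_3e^{-c_4n^{\delta_1}}\|P_nW_\alpha\|_{L_p(B(a_n))}
\]
for any $\delta_2\in(0,2/3)$, and then observes $1+n^{\delta_2-2/3}\le 2$. No polar decomposition, no perturbed weight, no Nikolskii step. Your route instead tries to rebuild Ganzburg's inequality from the one-dimensional theory; the outline is sound, and the perturbed-weight trick to absorb $r^{d-1}$ on the tail side is fine (since with fixed outer radius $2a_n>(1+\epsilon)\tilde a_n$ the univariate decay is $e^{-cn}$, which dominates the $e^{O(\eta n)}$ loss for small $\eta$). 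The genuine extra work is the step you flag as the ``main obstacle'': reinserting the volume factor $r^{d-1}$ on the right-hand side. Invoking a multivariate weighted Nikolskii inequality does the job, but that is itself a result of roughly the same depth as what you are proving; a more self-contained fix is to split $\int_0^{a_n}=\int_0^1+\int_1^{a_n}$, bound the first piece by $\|P_n\|_{L_\infty(B(1))}^p$ and use the classical unweighted Nikolskii inequality on $B(1)$ (which costs only a power of $n$), and on $[1,a_n]$ use $r^{d-1}\ge 1$. Either way your argument needs more than a sketch to be complete, whereas the paper's proof is a direct citation.
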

\begin{proof}
The inequality~\eqref{eq6} is a direct corollary of the
corresponding univariate inequalities. Indeed, for any direction
$\bmu\in\R^d$, $|\bmu|=1$, it is enough to consider a univariate
polynomial $\widetilde{P}_n(t):=P_n(t\bmu)$ in $t$ and apply the
last inequality from the proof of~\cite{Lub}*{Theorem~6.1}.

For some positive constants $c_3$, $c_4$, $\delta_1$, and for any $\delta_2\in(0,2/3)$, the inequality
\[
\|P_nW_\alpha\|_{L_p\left(\R^d\setminus
B\left(\left(1+n^{\delta_2-2/3}\right)a_n\right)\right)}\leq
c_3e^{-c_4n^{\delta_1}}\|P_nW_\alpha\|_{L_p(B(a_n))}
\]
was established as the last inequality in the proof
of~\cite{Gan}*{Theorem~2}, where $\Omega:=\R^d$,
$\Omega_n:=B({a_n})$. With $\delta=\delta_1$ and arbitrary choice of $\delta_2\in(0,2/3)$,
we have $n^{\delta_2-2/3}\le1$, and~\eqref{eq22} follows.
\end{proof}

Next tool to be used in the proof of the main result is an estimate
on the growth of the second directional derivatives of a polynomial.
\begin{lemma}\label{lem4}
Let $P_n\in\Pi_{n,d}$. Then for any $r>0$ and any direction ${\bmu},
|{\bmu}|=1$, we have
\begin{equation}\label{eq7}
\left|\dfrac{\partial^2 P_n(\x)}{\partial{\bmu}^2}\right|\leq
\left(\dfrac{8|\x|}{r}\right)^n
\dfrac{8n(n-1)}{r^2}\|P_n\|_{L_\infty(B(r))}, \quad |\x|\geq r/4.
\end{equation}
\end{lemma}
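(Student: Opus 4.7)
The plan is to reduce the inequality to the one-variable V.\ A.\ Markov inequality by a rescaling that centers the estimate at $\x$. Let $M:=\|P_n\|_{L_\infty(B(r))}$. First I would record the pointwise Chebyshev-type growth bound: for any $\y\in\R^d$ with $|\y|\ge r$,
\begin{equation*}
|P_n(\y)|\le M\,(2|\y|/r)^n,
\end{equation*}
which follows from applying the classical univariate estimate $|p(t)|\le \|p\|_{L_\infty([-r,r])}\,T_n(|t|/r)\le \|p\|_{L_\infty([-r,r])}\,(2|t|/r)^n$ (valid for $|t|\ge r$) to the radial slice $p(t):=P_n(t\y/|\y|)$.

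Next, I would introduce the univariate polynomial $q(s):=P_n(\x+s|\x|\bmu)$ of degree $\le n$ in $s$; the chain rule gives $q''(0)=|\x|^2\,\partial_\bmu^2 P_n(\x)$. For $s\in[-1,1]$ the argument $\x+s|\x|\bmu$ has norm at most $2|\x|$, and since $|\x|\ge r/4$ we have $4|\x|/r\ge 1$, so the pointwise bound above (combined with the trivial estimate $|P_n|\le M$ on $B(r)$ when $2|\x|<r$) yields $\|q\|_{L_\infty([-1,1])}\le M(4|\x|/r)^n$.

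Finally, V.\ A.\ Markov's inequality for the second derivative gives $|q''(0)|\le T_n''(1)\,\|q\|_{L_\infty([-1,1])}=\tfrac{n^2(n^2-1)}{3}\,\|q\|_{L_\infty([-1,1])}$. Dividing by $|\x|^2$ and using $1/|\x|^2\le 16/r^2$ produces
\begin{equation*}
|\partial_\bmu^2 P_n(\x)|\le\frac{16\,n^2(n^2-1)}{3\,r^2}\,M\,(4|\x|/r)^n.
\end{equation*}
Factoring $n^2(n^2-1)=n(n-1)\cdot n(n+1)$ and invoking the elementary bound $n(n+1)\le 3\cdot 2^{n-1}$ (immediate by induction, with equality at $n=2,3$) absorbs $n(n+1)$ into $2^n$, converting the base $4|\x|/r$ into $8|\x|/r$ and yielding~\eqref{eq7}. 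The cases $n\le 1$ are trivial as both sides vanish.

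The main (essentially bookkeeping) obstacle is matching the explicit constants $8$ and the base $8|\x|/r$ in the lemma; the key structural observation is that rescaling $s\mapsto s|\x|$ places $\x$ at the center of a unit interval, where V.\ A.\ Markov is sharp, rather than working along the line $\x+t\bmu$ where $\x$ may sit inconveniently relative to the region on which $P_n(\x+t\bmu)$ is well controlled.
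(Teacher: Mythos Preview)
Your argument is correct and reaches exactly the stated constants, but it proceeds differently from the paper. The paper first differentiates, then extrapolates: it applies the univariate Bernstein inequality at the center of an interval twice to obtain $\bigl|\partial_\bmu^2 P_n(\x)\bigr|\le \tfrac{8n(n-1)}{r^2}\|P_n\|_{L_\infty(B(r))}$ on $B(r/4)$, and afterwards uses Chebyshev extrapolation along the radial line through the origin on the polynomial $t\mapsto \partial_\bmu^2 P_n(t\x/|\x|)$ to pass from $|t|\le r/4$ to $|t|\ge r/4$. You reverse the order: you first extrapolate $P_n$ itself radially to bound $\|P_n\|_{L_\infty(B(2|\x|))}$, and then differentiate via V.\,A.\ Markov's second-derivative inequality on a segment of radius $|\x|$ centered at $\x$. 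The paper's route uses only the elementary Bernstein bound $|p'(0)|\le n\|p\|_{L_\infty([-1,1])}$ and avoids V.\,A.\ Markov altogether, at the price of an extra halving of radii; your route is a single Markov application but requires the sharper constant $T_n''(1)=\tfrac{n^2(n^2-1)}{3}$ and then the arithmetic absorption $n(n+1)\le 3\cdot 2^{n-1}$ to match the stated form. Both paths are short; the paper's is marginally more self-contained in its ingredients, while yours makes the role of $\x$ as the center of the estimate more transparent.
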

\begin{proof}
Let $r>0$ be fixed. Observe that for any direction ${\bmu}\in\R^d$,
$|{\bmu}|=1$, and point $\x\in\R^d$, $|\x|\le r/2$, both points
$\x\pm \frac r2\bmu$ belong to $B(r)$. Therefore, for any
$P_n\in\Pi_{n,d}$, we can apply the Bernstein inequality
$|\widetilde P_n'(0)|\le n \|\widetilde P_n\|_{L_\infty([-1,1])}$ to
the univariate polynomial $\widetilde
P_n(t):=P_n(\x+\frac{tr}2\bmu)$, $t\in\R$, and obtain
\[
\left|\dfrac{\partial P_n(\x)}{\partial{\bmu}}\right| \le
\frac{2n}{r} \|P_n\|_{L_\infty(B(r))}, \quad |\x|\leq r/2.
\]
Applying this inequality again for $\dfrac{\partial
P_n(\x)}{\partial{\bmu}}$ (which is a polynomial in $\x$ of total
degree $<n$) and $\x$, $|\x|\le r/4$, we obtain
\begin{equation}\label{eq10}
\left|\dfrac{\partial^2 P_n(\x)}{\partial{\bmu}^2}\right|\leq
\dfrac{4(n-1)}{r}\left\|\dfrac{\partial
P_n}{\partial{\bmu}}\right\|_{L_\infty(B({r/2}))}\leq\dfrac{8n(n-1)}{r^2}\|P_n\|_{L_\infty(B(r))},
\quad |\x|\leq r/4.
\end{equation}
We now fix $\x$ such that $|\x|\geq r/4$. The function
$R(t):=\dfrac{\partial^2
P_n}{\partial{\bmu}^2}\left(t\x/|\x|\right)$ is a polynomial of a
single variable $t$ of degree $<n$. Comparing this polynomial with
the corresponding Chebyshev polynomial (for instance,
see~\cite{DevLor}*{(2.10), p.~101}), we get
\[
|R(t)|\leq
\left(\dfrac{2|t|}{r/4}\right)^n\|R\|_{L_\infty([-r/4,r/4])}, \quad
|t|\geq r/4.
\]
Substituting $t=|\x|$ in the inequality above, we obtain
\[
\left|\dfrac{\partial^2 P_n(\x)}{\partial{\bmu}^2}\right|\leq
\left(\dfrac{8|\x|}{r}\right)^n\|R\|_{L_\infty([-r/4,r/4])}, \quad
|\x|\geq r/4.
\]
Taking into account
\[
|R(t)|=\left|\dfrac{\partial^2
P_n(t\x/|\x|)}{\partial{\bmu}^2}\right|\leq\left\|\dfrac{\partial^2
P_n}{\partial{\bmu}^2}\right\|_{L_\infty(B({r/4}))},\quad |t|\le
r/4,
\]
and~\eqref{eq10}, we have
$$\left|\dfrac{\partial^2 P_n(\x)}{\partial{\bmu}^2}\right|\leq
\left(\dfrac{8|\x|}{r}\right)^n\left\|\dfrac{\partial^2
P_n}{\partial{\bmu}^2}\right\|_{L_\infty(B({r/4}))}
\leq\left(\dfrac{8|\x|}{r}\right)^n
\dfrac{8n(n-1)}{r^2}\|P_n\|_{L_\infty(B(r))}, \quad |\x|\geq r/4,$$
and the proof of the lemma is complete.
\end{proof}

It is also possible to obtain~\eqref{eq10} from the results of~\cite{Kro2} or~\cite{Sar},
but the direct proof is short, so we included it here for completeness.

\section{Proof of Theorem~\ref{thm1}}

We begin with a sketch of the proof of Theorem~\ref{thm1}. Given
convex $f$ in the proper weighted class, we apply Lemma~\ref{lem1}
and obtain a piecewise linear convex $h$ ``close'' to $f$ in the
weighted norm. The next step is to apply Lemma~\ref{shv-jack} to the
function $h$ with $r:=r_n$ to be chosen later. Since $h$
satisfies~\eqref{eq23}, if $\frac{r_n}{n+1}\to0$ as $n\to\infty$, we
find a sequence of polynomials $P_n$ convex on $B(r_n)$ that
approximate $h$ uniformly on $B(r_n)$ with certain rate. This fact
together with the estimates on the weighted norm of $P_n$ outside
$B(r_n)$ (provided by the multivariate restricted-range inequalities
given in Lemma~\ref{lem3}) imply that $P_n$ is ``close'' to $h$ in
the weighted norm. The goal is to modify $P_n$ in such a way that
the resulting polynomial is convex on the whole $\R^d$. Using the
idea of the proof of~\cite{Shv}*{Theorem~2}, we show that a proper
choice of $\gamma_n$ will ensure that the polynomial
$P_n(\x)+\gamma_n|\x|^{2n}$ has the desired properties for large
$n$. Namely, we choose $\gamma_n$ so that: (i) the weighted norm of
the added term $\gamma_n|\x|^{2n}$ is small, and, (ii) the second
directional derivatives of the term are larger than those of $P_n$
to ensure that the sum is convex on $\R^d$. In order to establish
property (ii), we estimate the second directional derivatives of
$P_n$ using Lemma~\ref{lem4}. This is the most technical part of the
proof which leads to an additional constraint on $r_n$, namely that
$r_n$ grows faster than $n^{1/\alpha}$. Ultimately, the required
choice of $\gamma_n$ is possible if one selects $r_n=n^{1/\beta}$
with some $\beta$, $1<\beta<\alpha$.

Now we show all the technical details in a formal proof.
\begin{proof}[Proof of Theorem~\ref{thm1}.]
Suppose that $\alpha>1$, $1\le p\le\infty$, $f:\R^d\to\R$ is convex,
and $fW_\alpha\in L_p(\R^d)$ if $1\le p<\infty$, or $f\in
C_{W_\alpha}(\R^d)$ if $p=\infty$. Let $\eps>0$ be fixed.

We apply Lemma~\ref{lem1} to obtain a piecewise linear convex
function $h$ satisfying~\eqref{eq5}, and by~\eqref{eq23},
\begin{equation}\label{eq13}
\omega(h,t,\R^d)\leq Lt,\quad t>0,\quad \text{ and }\quad
|h(\x)|\leq L|\x|+|h(\zero)|,\quad \x\in\R^d,
\end{equation}
where $L>0$ does not depend on $t$.

Fix some $\beta$, $1<\beta<\alpha$, and set $r_n:=n^{1/\beta}$. We
apply Lemma~\ref{shv-jack} with $r=r_n$ and get a sequence of
polynomials $P_n\in\Pi_{n,d}$ convex on $B({r_n})$ such that
\begin{equation}\label{eq14}
\|h-P_n\|_{L_\infty(B({r_n}))}\leq
c_2\omega\left(h,\frac{r_n}{n+1},B({r_n})\right).
\end{equation}
Taking into account~\eqref{eq13}, we have
\begin{equation}\label{eq15}
\|h-P_n\|_{L_\infty(B({r_n}))}\leq c_2 L \frac{r_n}{n+1}\leq c_2 L
n^{1/\beta-1}.
\end{equation}
In addition, since $\alpha>\beta$ and both $n$th Freud and
Mhaskar-Rakhmanov-Saff numbers have the order of $n^{1/\alpha}$,
there exists $n_0$ such that
\begin{equation*}
r_n\geq \max\{4q_{2n}, 2a_n\}, \quad n\geq n_0.
\end{equation*}
Then Lemma~\ref{lem3} implies that for $n\geq n_0$
\[
\|P_nW_\alpha\|_{L_p(\R^d\setminus B({r_n}))}\leq
\begin{cases}
2^{-n}\|P_nW_\alpha\|_{L_\infty(B({q_{2n}}))},&\text{if }
p=\infty,\\
c_3e^{-c_4n^{\delta}}\|P_nW_\alpha\|_{L_p\left(B\left(a_n\right)\right)},&\text{if
}1\le p<\infty.
\end{cases}
\]
The above inequality immediately implies
\begin{equation}\label{eq24}
\|P_nW_\alpha\|_{L_p(\R^d\setminus B({r_n}))}\leq
e^{-c_5n^{\ddelta}}\|P_nW_\alpha\|_{L_p(B({r_{n}}))}, \quad n\ge
n_0,
\end{equation}
with positive constants $c_5$ and $\ddelta$ independent of $n$.

It is not hard to see that the polynomial $P_n$ approximates $h$
``well'' in the weighted norm. Indeed, from~\eqref{eq15},
\eqref{eq24} and the fact that $\|hW_\alpha\|_{L_p(\R^d)}<\infty$,
we conclude that there exists $n_1\geq n_0$ such that for all $n\geq
n_1$,
$$\|(h-P_n)W_\alpha\|_{L_p(B({r_n}))}\leq\|h-P_n\|_{L_\infty(B({r_n}))}\|W_\alpha\|_{L_p(\R^d)}<\eps,$$
and
\begin{align*}
\|(h-P_n)W_\alpha\|_{L_p(\R^d\setminus B({r_n}))}&\leq
\|hW_\alpha\|_{L_p(\R^d\setminus
B({r_n}))}+\|P_nW_\alpha\|_{L_p(\R^d\setminus
B({r_n}))}\\
&\leq \|hW_\alpha\|_{L_p(\R^d\setminus
B({r_n}))}+e^{-c_5 n^{\ddelta}}\big(\|(h-P_n)W_\alpha\|_{L_p(B({r_n}))}+\|hW_\alpha\|_{L_p(B({r_n}))}\big)\\
&<\eps.
\end{align*}
Thus,
\begin{equation}\label{eq25}
\|(h-P_n)W_\alpha\|_{L_p(\R^d)}<2^{1/p}\eps, \quad n\ge n_1.
\end{equation}
Finally, consider new polynomials
\[
S_n(\x):=P_n(\x)+Q_n(\x):=P_n(\x)+\eps\tau_n |\x|^{2n}, \quad
\x\in\R^d,
\]
where $\tau_n:=\||\cdot|^{2n}W_\alpha(\cdot)\|_{L_p(\R^d)}^{-1}$. It
is straightforward to verify that
\begin{equation}\label{tau}
\tau_n=\begin{cases}
e^{2n/\alpha}\left(\dfrac{2n}{\alpha}\right)^{-2n/\alpha},&\text{if }p=\infty,\\
\left(\dfrac{\alpha
p^{(2np+d)/\alpha}}{dV_d\Gamma\left((2np+d)/\alpha\right)}\right)^{1/p},&\text{if
}1\le p<\infty,
\end{cases}
\end{equation}
where $V_d$ is the volume of the unit ball in $\R^d$. For
$p=\infty$, evaluation of $\tau_n$ is a simple optimization problem,
and for $1\le p<\infty$, the $L_p$-norm that appears in the
definition of $\tau_n$ can be easily computed using spherical
coordinates.

With this choice of $\tau_n$, by~\eqref{eq5} and~\eqref{eq25}, we
immediately get
\begin{equation}\label{eps-close}
\|(f-S_n)W_\alpha\|_{L_p(\R^d)}<(2^{1/p}+2)\eps, \quad n\ge n_1.
\end{equation}
It remains to show that the choice of $\tau_n$ also guarantees
convexity of $S_n$ on $\R^d$ for sufficiently large $n$. Indeed, it
is clear that $Q_n$ is convex on $\R^d$, hence, $S_n$ is convex on
$B(r_n)$. From~\cite{Shv}*{(26)}, we get that
\begin{equation}\label{eq17}
\dfrac{\partial^2 Q_n(\x)}{\partial {\bmu}^2}\geq 2n\eps\tau_n
|\x|^{2(n-1)}, \quad \x,\bmu\in\R^d, \quad |\bmu|=1.
\end{equation}
We need to verify that given any direction $\bmu\in\R^d$,
$|{\bmu}|=1$, we have
\[
\dfrac{\partial^2 P_n(\x)}{\partial {\bmu}^2}+\dfrac{\partial^2
Q_n(\x)}{\partial {\bmu}^2}\geq 0, \quad |\x|\geq r_n.
\]
Taking into account~\eqref{eq7} from Lemma~\ref{lem4} and the
inequality~\eqref{eq17}, it is sufficient to show that
\[
2n\eps\tau_n |\x|^{2(n-1)}\geq
\left(\dfrac{8|\x|}{r_n}\right)^n\dfrac{8n(n-1)}{r_n^2}\|P_n\|_{L_\infty(B({r_n}))},
\quad |\x|\geq r_n.
\]
The inequalities~\eqref{eq13} and~\eqref{eq15} imply that
\[
\|P_n\|_{L_\infty(B({r_n}))}\leq
\|h-P_n\|_{L_\infty(B({r_n}))}+\|h\|_{L_\infty(B({r_n}))} \le c_2 L
\frac{r_n}n+Lr_n+|h(\zero)| \leq c_6 r_n,
\]
where $c_6>0$ is independent of $n$. Therefore, we need to prove
that for sufficiently large $n$,
\[
2n\eps\tau_n |\x|^{2(n-1)}\geq
c_7\left(\dfrac{8|\x|}{r_n}\right)^n\dfrac{n^2}{r_n}, \quad |\x|\geq
r_n,
\]
where $c_7=8c_6$. Since $2(n-1)\geq n$ for $n\geq 2$, it is enough
to verify that the previous inequality holds when $|\x|=r_n$:
$$2n\eps\tau_n (r_n)^{2(n-1)}\geq  {c_7}\dfrac{8^n n^2}{r_n},$$
which, recalling that $r_n=n^{1/\beta}$, is equivalent to
\begin{equation}\label{eq26}
\ln(\tau_n)+\frac{2n-1}\beta \ln n\ge \ln{c_7} + n\ln{8}+\ln
n-\ln(2\eps)=:I(n).
\end{equation}
Clearly, $\lim_{n\to\infty}\frac{I(n)}{n\ln n}=0$. If $p=\infty$, it
readily follows from~\eqref{tau} that
$\lim_{n\to\infty}\frac{\ln(\tau_n)}{n\ln n}=-\frac2\alpha$. If
$1\le p<\infty$, we note that by the Stirling's formula
$\lim_{t\to\infty} \frac{\ln(\Gamma(t))}{t\ln t}=1$, therefore,
by~\eqref{tau}, we have $\lim_{n\to\infty}\frac{\ln(\tau_n)}{n\ln
n}=-\frac2\alpha$ in this case as well. Since
\[
\lim_{n\to\infty}\frac1{n\ln n}\left(\ln(\tau_n)+\frac{2n-1}\beta\ln
n-I(n)\right)=-\frac2\alpha+\frac2\beta>0,
\]
there exists $n_2\geq n_1$ such that for any $n\geq n_2$ the
inequality~\eqref{eq26} holds.

This means that for any $n\geq n_2$ the polynomial $S_n$ of total
degree $2n$ is convex on $\R^d$ and satisfies~\eqref{eps-close}.
Since $\eps>0$ was arbitrary, the proof of the theorem is complete.
\end{proof}

\section*{Acknowledgement}
We thank both referees for their valuable comments one of which
pointed to a serious typo in the initial submission of the
manuscript.

\end{document}